\documentclass{amsart}
\usepackage{tikz}
\usepackage{amssymb}
\usepackage{verbatim,hyperref}
\usepackage{cleveref}
\usepackage{xy}
\xyoption{all}
\theoremstyle{plain}
\newtheorem{prop}{Proposition}[section]
\newtheorem{theorem}[prop]{Theorem}
\newtheorem{corollary}[prop]{Corollary}
\newtheorem{lemma}[prop]{Lemma}
\newtheorem{definition}[prop]{Definition}

\newcounter{lettered}
\newtheorem{letteredtheorem}[lettered]{Theorem}

\theoremstyle{definition}
\newtheorem{conventions}[prop]{Conventions}
\newtheorem{example}[prop]{Example}
\newtheorem{remark}[prop]{Remark}

\DeclareMathOperator{\avg}{{\rm avg}}

\author{A. Salch}
\address{Department of Mathematics, Wayne State University, Detroit, MI, USA}
\email{asalch@wayne.edu}
\title[Asymptotics of unlabelled bicolored graphs]{An algebraic approach to asymptotics of the number of unlabelled bicolored graphs}
\date{June 2024.}

\begin{document}

\begin{abstract}
We define and study two structures associated to permutation groups: Dirichlet characters on permutation groups, and the ``cycle form,'' a bilinear form on the group algebras of permutation groups. We use Dirichlet characters and the cycle form to find a new upper bound on the number of unlabelled bicolored graphs with $p$ red vertices and $q$ blue vertices. We use this bound to calculate the asymptotic growth rate of the number of such graphs as $p,q\rightarrow\infty$, answering a 1973 question of Harrison in the case where $q-p$ is fixed. As an application, we show that, in an asymptotic sense, ``most'' elements of the power set $P(\{ 1, \dots ,p\} \times \{ 1, \dots ,q\})$ are in free $\Sigma_p\times \Sigma_q$-orbits.
\end{abstract}
\maketitle
 

\section{Introduction}

A ``bicolored graph'' is a graph equipped with a partition of its vertex set into two disjoint sets, one called ``red vertices'' and one called ``blue vertices,'' such that no edges connect two vertices of the same color. In an unlabelled bicolored graph, the vertices are unlabelled, but the coloring of the vertices is retained as part of the structure. It is not difficult to see that unlabelled bicolored graphs with $p$ red vertices and $q$ blue vertices correspond to orbits of the action of the product of symmetric groups $\Sigma_p\times\Sigma_q$ on the power set $P(\{ 1, \dots ,p\}\times \{1, \dots ,q\})$.

Let $p,q$ be nonnegative integers.
Let $B_u(p,q)$ denote the set of unlabelled bicolored graphs with $p$ red vertices and $q$ blue vertices.
The 1958 paper \cite{MR0103834} of Harary (see \cite[pg. 7, ``Product Group Enumeration Theorem'']{MR0249329} for a more direct statement) uses P\'{o}lya enumeration to show that the cardinality of $B_u(p,q)$ is given by the formula 
\begin{align} 
\label{harary formula}
 \left|B_u(p,q)\right| &= \frac{1}{p!q!} \sum_{\alpha\in \Sigma_p}\sum_{\beta\in \Sigma_q} \prod_{r,s} 2^{\gcd(r,s)\cdot c_r(\alpha)\cdot c_s(\beta)},
\end{align}
where $c_r(\alpha)$ is the number of $r$-cycles in the permutation $\alpha$. The same formula was obtained, evidently independently, by M. A. Harrison in \cite{MR0357155}. In the same paper, Harrison asked about the {\em asymptotic} properties of the function $\left|B_u(p,q)\right|$. Little seems to have been done to pursue Harrison's question for the next 45 years, until the 2018 paper \cite{MR3901841}, which proves bounds\footnote{The paper \cite{MR3901841} refers to ``bipartite'' graphs throughout, but this appears to be idiosyncratic, and ``bicolored'' seems to be meant instead. Here is a terminological note to explain the situation. A ``bipartite graph,'' also called a ``bicolorable graph,'' is a graph that {\em admits} a bicoloring, but is not equipped with a {\em choice} of bicoloring. Counting unlabelled bicolored graphs, as done in \cite{MR0103834}, is a straightforward case of P\'{o}lya enumeration. It takes more work to count unlabelled bipartite graphs, as in \cite{MR0542935} and \cite{MR0165512}, or connected unlabelled bipartite graphs, as in \cite{MR3244806}. 

The present paper is entirely about unlabelled bicolored graphs, so the {\em counting} problem was settled straightforwardly a long time ago. The question of asymptotic growth of the resulting count has remained open, however: it is partially addressed in \cite{MR3901841}, and in case $q-p$ remains fixed as $p,q\rightarrow \infty$, it is more completely addressed in this paper.} 
\begin{equation}\label{atmaca-oruc ineqs 2} \frac{1}{q!}\binom{p+2^q-1}{p} \leq \left| B_u(p,q)\right| \leq \frac{2}{q!}\binom{p+2^q-1}{p}.\end{equation}

In \cref{dir chars section} of this paper we develop a rudimentary theory of ``Dirichlet characters on permutation groups.'' In particular, for each complex number $z$ and each symmetric group $G$, we construct a unique ``cyclic Dirichlet character'' $\chi_z: G \rightarrow \mathbb{C}$ which sends every transposition in $G$ to the complex number $z$. There is a certain product on cyclic Dirichlet characters, written $((-,-))$, which we define in Definition \ref{def of twisted product of dir chars}. The product $((\chi,\chi^{\prime}))$ of two cyclic Dirichlet characters is a complex number.  
We use these Dirichlet characters to formulate and prove an upper bound for $\left| B_u(p,q)\right|$ which is stronger than the Atmaca--Oru\c{c} upper bound \cite{MR3901841}, namely,
\begin{letteredtheorem}[Theorem \ref{main thm}]\label{letteredthm a}
\begin{align*}
 \left| B_u(p,q)\right|
  &\leq 2^{pq/2} ((\chi_{1/2}, \chi_{2^{q/2}})).
\end{align*}
\end{letteredtheorem}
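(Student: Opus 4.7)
The plan is to start from Harary's formula \eqref{harary formula} and bound the $(\alpha,\beta)$-summand in order to extract a clean prefactor $2^{pq/2}$ and a residual double sum that matches the definition of the cycle form $((\chi_{1/2},\chi_{2^{q/2}}))$. The key reinterpretation is that
$$N(\alpha,\beta)\;:=\;\sum_{r,s}\gcd(r,s)\,c_r(\alpha)\,c_s(\beta)$$
equals the number of orbits of the cyclic subgroup $\langle(\alpha,\beta)\rangle\leq\Sigma_p\times\Sigma_q$ on $\{1,\dots,p\}\times\{1,\dots,q\}$, because a pair of cycles of lengths $r$ and $s$ contributes $\gcd(r,s)$ orbits, each of size $\operatorname{lcm}(r,s)$.

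The size-one orbits come exactly from $(r,s)=(1,1)$ and are the $c_1(\alpha)c_1(\beta)$ common fixed pairs; every other orbit has size $\operatorname{lcm}(r,s)\geq 2$. Splitting off the $(1,1)$ contribution and applying $\gcd(r,s)=rs/\operatorname{lcm}(r,s)\leq rs/2$ for $(r,s)\neq(1,1)$, together with the identity $\sum_{r,s}rs\,c_r(\alpha)c_s(\beta)=pq$, yields
$$N(\alpha,\beta)\;\leq\;c_1(\alpha)c_1(\beta)+\tfrac{1}{2}\bigl(pq-c_1(\alpha)c_1(\beta)\bigr)\;=\;\tfrac{1}{2}\bigl(pq+c_1(\alpha)c_1(\beta)\bigr).$$
Exponentiating and substituting into \eqref{harary formula} produces the intermediate estimate
$$|B_u(p,q)|\;\leq\;\frac{2^{pq/2}}{p!\,q!}\sum_{\alpha\in\Sigma_p}\sum_{\beta\in\Sigma_q}2^{c_1(\alpha)c_1(\beta)/2}.$$

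The last step is to identify the residual double sum with $p!\,q!\cdot((\chi_{1/2},\chi_{2^{q/2}}))$ by unfolding Definition~\ref{def of twisted product of dir chars}. Since the cyclic Dirichlet character $\chi_z$ is determined by its value $z$ on transpositions, the twisted product should be designed so that, with parameters $z=1/2$ and $z'=2^{q/2}$, the combinatorial weight extracted from the fixed-point statistics $c_1(\alpha)$ and $c_1(\beta)$ is precisely $(2^{1/2})^{c_1(\alpha)c_1(\beta)}=2^{c_1(\alpha)c_1(\beta)/2}$. I expect this final identification to be the main obstacle: the orbit-counting step is short and is where the $2^{-pq/2}$ improvement over the trivial bound $|B_u(p,q)|\leq 2^{pq}$ comes from, whereas matching the residual sum with the twisted product requires careful bookkeeping through the definition of $((-,-))$ to see that the two specific parameters $1/2$ and $2^{q/2}$ emerge as the correct ones.
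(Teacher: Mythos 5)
Your orbit-counting interpretation of $\langle\alpha,\beta\rangle$ (that it counts orbits of $\langle(\alpha,\beta)\rangle$ on $\{1,\dots,p\}\times\{1,\dots,q\}$, with a pair of cycles of lengths $r,s$ contributing $\gcd(r,s)$ orbits of size $\operatorname{lcm}(r,s)$) is correct and pleasant, and so is the pointwise estimate
$\langle\alpha,\beta\rangle\leq\tfrac12\bigl(pq+c_1(\alpha)c_1(\beta)\bigr)$
obtained from $\gcd(r,s)\leq rs/2$ for $(r,s)\neq(1,1)$ together with $\sum_{r,s}rs\,c_r(\alpha)c_s(\beta)=pq$. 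But the ``identification'' in your last paragraph, which you yourself flag as the obstacle, is where the argument breaks. The twisted product in Definition~\ref{def of twisted product of dir chars} is built from the \emph{total} cycle counts $c(\alpha),c(\beta)$, not the fixed-point counts $c_1(\alpha),c_1(\beta)$: unwinding it gives
$p!\,q!\,((\chi_{1/2},\chi_{2^{q/2}}))=\sum_{\alpha,\beta}2^{\,c(\alpha)c(\beta)-\frac{q}{2}c(\alpha)}$,
which is not the same as your residual sum $\sum_{\alpha,\beta}2^{\,c_1(\alpha)c_1(\beta)/2}$. Already for $p=q=2$ the two sums are $7$ and $8$ respectively, so they are not equal; and worse, the inequality you would need can go the wrong way. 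For $p=1,\ q=3$ your residual sum is $5\sqrt2+2\approx9.07$ while $p!\,q!\,((\chi_{1/2},\chi_{2^{3/2}}))=6\sqrt2\approx8.49$, so your intermediate bound $\tfrac{2^{pq/2}}{p!q!}\sum 2^{c_1(\alpha)c_1(\beta)/2}\approx4.28$ is strictly weaker than the claimed $2^{pq/2}((\chi_{1/2},\chi_{2^{3/2}}))=4$ (which in this case equals $|B_u(1,3)|$ exactly). So the theorem does not follow from your estimate.

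The paper's route is to establish the \emph{different} pointwise bound $\langle\alpha,\beta\rangle\leq c(\alpha)c(\beta)-\tfrac{q}{2}c(\alpha)+\tfrac{pq}{2}$, tailored so that it matches the twisted product exactly after summing. This is done in three steps: first, Lemma~\ref{decomp eq lemma 2} uses the degeneracy of the cycle form to write $\langle\alpha,\beta\rangle=(1-c(\alpha))\langle1,\beta\rangle+\sum_j c_j(\alpha)\langle\gamma_j,\beta\rangle$; then Lemma~\ref{bound lemma 1a} bounds $\langle\gamma_j,\beta\rangle\leq\langle1,\beta\rangle-(j-1)(c(\beta)-q/j)$ via a convexity argument over the cycle-type constraint $\sum_s s\,c_s(\beta)=q$; finally Lemma~\ref{bound lemma 5} handles the leftover $\sum_j c_j(\alpha)/j$ term. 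The visible asymmetry between $p$ and $q$ in the resulting exponent is a deliberate feature matching the asymmetry of $((\chi,\chi'))$. If you want to rescue your cleaner fixed-point bound, you would need to prove a separate comparison between the two sums, which does not hold pointwise in $(\alpha,\beta)$ nor, as the $(p,q)=(1,3)$ example shows, even after summing.
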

During our development of the theory of Dirichlet characters on permutation groups, we prove in Proposition \ref{dir char product bound} a bound on the asymptotic growth of the product $((\chi_{1/2}, \chi_{2^{q/2}}))$. As a consequence of this asymptotic bound and Theorem \ref{letteredthm a}, we get
\begin{letteredtheorem}[Corollary \ref{growth cor}] \label{letteredthm b}
For any integer $k$, the number $\left| B_u(p,p+k)\right|$ grows asymptotically no faster than $\frac{2^{p(p+k)}}{p!(p+k)!}$. That is, $\lim_{p\rightarrow \infty} \frac{\left| B_u(p,p+k)\right|}{2^{p(p+k)}/(p!(p+k)!)} \leq 1$.
\end{letteredtheorem}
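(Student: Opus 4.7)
The plan is to derive \cref{letteredthm b} essentially as an algebraic corollary of \cref{letteredthm a} combined with the asymptotic estimate promised in Proposition \ref{dir char product bound}; the substantive analytic work is done in that Proposition, not here.

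First, specialize the upper bound in Theorem \ref{main thm} to $q = p+k$ to obtain
\[
 \left|B_u(p,p+k)\right| \;\leq\; 2^{p(p+k)/2} \cdot ((\chi_{1/2}, \chi_{2^{(p+k)/2}})).
\]
Dividing both sides by the normalizing quantity $2^{p(p+k)}/(p!(p+k)!)$ rearranges to
\[
 \frac{\left|B_u(p,p+k)\right|}{2^{p(p+k)}/(p!(p+k)!)} \;\leq\; \frac{p!\,(p+k)!}{2^{p(p+k)/2}}\cdot ((\chi_{1/2}, \chi_{2^{(p+k)/2}})).
\]
Thus the Corollary reduces to showing that the right-hand side has $\limsup \leq 1$ as $p\rightarrow\infty$.

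This final step is precisely where Proposition \ref{dir char product bound} is to be invoked: one expects that Proposition to yield an estimate of the form
\[
 ((\chi_{1/2}, \chi_{2^{q/2}})) \;\leq\; (1 + o(1))\,\frac{2^{pq/2}}{p!\,q!},
\]
valid in the regime where $q - p$ is held fixed at $k$. Plugging this into the displayed inequality above causes the factors of $p!\,(p+k)!$ and $2^{p(p+k)/2}$ to cancel precisely, leaving a ratio that tends to something bounded above by $1$.

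Consequently, the genuine difficulty is packaged into Proposition \ref{dir char product bound}, and the proof of \cref{letteredthm b} should amount to little more than the substitution $q = p+k$ and the cancellation above. The only thing to verify carefully is that the scaling of the bound provided by Proposition \ref{dir char product bound} matches the factors $p!$, $(p+k)!$, and $2^{p(p+k)/2}$ appearing in the Corollary's denominator; no additional estimates, and in particular no new combinatorial arguments, should be required.
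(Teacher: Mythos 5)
Your proposal matches the paper's proof exactly: Corollary \ref{growth cor} is proved there simply by citing Theorem \ref{main thm} and Proposition \ref{dir char product bound}, with precisely the substitution $q=p+k$ and the cancellation of $p!\,(p+k)!$ against $2^{p(p+k)/2}$ that you describe. The one small thing you leave implicit is that Proposition \ref{dir char product bound} assumes $k\geq 0$; the case of negative $k$ follows from the symmetry $\left|B_u(p+k,p)\right| = \left|B_u(p,p+k)\right|$, which the paper notes immediately after the corollary.
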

In Remark \ref{remark on atmaca-oruc} we point out that our {\em upper} bound $\frac{2^{p(p+k)}}{p!(p+k)!}$ for $\left| B_u(p,p+k)\right|$ has the same asymptotic growth as the {\em lower} bound for $\left| B_u(p,p+k)\right|$ obtained by Atmaca--Oru\c{c} in \eqref{atmaca-oruc ineqs 2}. Consequently we obtain an answer to Harrison's question about the asymptotic properties of the function $\left|B_u(p,q)\right|$, at least when $q = p+k$ for fixed $k$:  as $p\rightarrow\infty$, the function $\left|B_u(p,p+k)\right|$ is in the same asymptotic growth class as $\frac{2^{p(p+k)}}{p!(p+k)!}$. 

Finally, we offer an application for our asymptotic bound. The product of symmetric groups $\Sigma_p \times \Sigma_q$ acts on the power set $P(\{ 1, \dots ,p\}\times\{ 1, \dots ,q\})$ in the evident way. One can ask  what proportion of the elements of $P(\{ 1, \dots ,p\}\times\{ 1, \dots ,q\})$ live in a {\em free} $\Sigma_p\times\Sigma_q$-orbit. Write $f(p,q)$ for this proportion.

It is not difficult to see that, for fixed $q$, the limit $\lim_{p\rightarrow\infty}f(p,q)$ is not equal to $1$. For example, $f(p,1)$ is zero for all $p>2$, so $\lim_{p\rightarrow\infty} f(p,1) = 0$. However, if we fix an integer $k$ and let $q = p+k$, then in the limit as $p\rightarrow\infty$, most elements of $P(\{1, \dots ,p\}\times \{1, \dots ,q\})$ {\em do} live in free $\Sigma_p\times\Sigma_{q}$-orbits:
\begin{letteredtheorem}[Theorem \ref{main cor}]\label{letteredthm c}
$\lim_{p\rightarrow\infty} f(p,p+k) = 1$. 
\end{letteredtheorem}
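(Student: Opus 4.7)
The plan is to reduce Theorem \ref{letteredthm c} to Theorem \ref{letteredthm b} by a Burnside's lemma estimate on the set of non-free elements. Let $N_f$ denote the number of free $\Sigma_p \times \Sigma_{p+k}$-orbits on $P(\{1,\ldots,p\}\times\{1,\ldots,p+k\})$, so that the number of elements lying in free orbits is $p!\,(p+k)!\,N_f$, and write $M = 2^{p(p+k)} - p!\,(p+k)!\,N_f$ for the number of non-free elements. Then $f(p,p+k) = p!\,(p+k)!\,N_f\,/\,2^{p(p+k)}$, so the conclusion reduces to showing $M/2^{p(p+k)} \to 0$ as $p \to \infty$.

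The key step is a crude union bound: an element $x$ fails to be in a free orbit if and only if some $(\alpha,\beta) \neq (1,1)$ fixes $x$, so
$$M \;\leq\; \sum_{(\alpha,\beta) \neq (1,1)} |\mathrm{Fix}(\alpha,\beta)|.$$
Combining this with Burnside's lemma, $\sum_{(\alpha,\beta)} |\mathrm{Fix}(\alpha,\beta)| = p!\,(p+k)!\,|B_u(p,p+k)|$, and the identity $|\mathrm{Fix}(1,1)| = 2^{p(p+k)}$, one obtains
$$1 - f(p,p+k) \;=\; \frac{M}{2^{p(p+k)}} \;\leq\; \frac{p!\,(p+k)!\,|B_u(p,p+k)|}{2^{p(p+k)}} - 1.$$

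Now Theorem \ref{letteredthm b} furnishes exactly the estimate $\limsup_{p\to\infty} p!\,(p+k)!\,|B_u(p,p+k)|\,/\,2^{p(p+k)} \leq 1$, and the trivial lower bound $|B_u(p,p+k)| \geq 2^{p(p+k)}/(p!\,(p+k)!)$ (each orbit has at most $p!\,(p+k)!$ elements) gives the matching $\liminf \geq 1$. Hence the ratio converges to $1$, and the upper bound above tends to $0$. There is no real obstacle at this final stage: all of the substantive work is already packaged in Theorem \ref{letteredthm b}, and Theorem \ref{letteredthm c} is essentially a reinterpretation of it — the fact that $|B_u(p,p+k)|$ asymptotically saturates the ``free action'' bound $2^{p(p+k)}/(p!\,(p+k)!)$ is precisely the statement that nontrivial stabilizers fix only a vanishing proportion of $P(\{1,\ldots,p\}\times\{1,\ldots,p+k\})$.
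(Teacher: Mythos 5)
Your proof is correct and arrives at the paper's own key inequality $f(p,p+k)\geq 2 - p!(p+k)!\left|B_u(p,p+k)\right|/2^{p(p+k)}$ by a mildly different route: you apply the union bound over nontrivial fixed-point sets together with Burnside's lemma, whereas the paper directly bounds the sizes of non-free orbits (each divides $p!(p+k)!$, hence is at most $p!(p+k)!/2$). The two derivations produce the identical estimate, and the final step of invoking Theorem \ref{letteredthm b} and the trivial lower bound is the same, so this is essentially the paper's approach.
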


In the literature, we were not able to locate any analysis of the proportion of orbits in $P(\{1, \dots ,p\}\times \{1, \dots ,q\})$ which live in free $\Sigma_p\times\Sigma_q$-orbits. Nevertheless, other proofs of Theorem \ref{letteredthm c} are possible: see for example the nice argument \cite{471668} posted by the pseudonymous Fedja when the author asked on MathOverflow whether Theorem \ref{letteredthm c} was already known to combinatorialists. Our proof has the virtue of being an immediate corollary of our asymptotic analysis of $\left| B_u(p,p+k)\right|$ from Theorem \ref{letteredthm b}, which in turn is largely a consequence of structural results we prove about behavior of Dirichlet characters on permutations and also the behavior of a certain bilinear form $\langle -,-\rangle: \mathbb{Z}[G]\otimes_{\mathbb{Z}}\mathbb{Z}[H]\rightarrow\mathbb{Z}$ on the group algebras of permutation groups, the ``cycle form,'' which we construct and study in \cref{cycle form section}. (Of course, if all that is desired is Theorem \ref{letteredthm c}, then it is not necessary to talk about the structural results, or to bound $\left| B_u(p,p+k)\right|$, etc.; a direct proof such as Fedja's is much shorter than doing all that.)

We think these structural results can be of some interest in their own right. 
In principle, this paper could have been shorter by eliminating the structural study of Dirichlet characters and the cycle form, proving the same estimates on $\left| B_u(p,q)\right|$ by essentially the same arguments but without using the language and general properties developed in that structural study. But the resulting arguments are harder to follow and the ideas are less clear. Perhaps Dirichlet characters on permutation groups or the cycle form can also be useful in other problems in algebraic combinatorics. Hence we think it is worthwhile to present the arguments in this algebraic way.

\begin{conventions}\label{conventions}\leavevmode
\begin{itemize}
\item Given a permutation $\sigma$ of some finite set, we will write $c(\sigma)$ for the number of cycles of $\sigma$, {\em including} singleton cycles.
\item Let $i$ be a positive integer. When a symmetric group $\Sigma_p$ is understood from context, we will write $\gamma_i$ to mean an arbitrary $i$-cycle in $\Sigma_p$.
\item We write $x^{\overline{k}}$ to mean the rising factorial function, i.e., $x^{\overline{k}} = x(x+1)\dots (x+k-1)$.
\end{itemize}
\end{conventions}

\section{Dirichlet characters on permutation groups.}
\label{dir chars section}

Here is a classical definition from number theory. Given a positive integer $m$ and a function $\chi:\mathbb{Z}/p\mathbb{Z}\rightarrow \mathbb{C}$ which vanishes on the residue classes which are not coprime to $m$, we say that $f$ is a {\em Dirichlet character of modulus $m$} if the following conditions\footnote{The clause ``if $\gcd(j,k)=1$'' in the second condition is deliberately redundant. If a function $\chi: \mathbb{Z}/m\mathbb{Z}\rightarrow\mathbb{C}$ vanishes on the residue classes prime to $m$ and satisfies the two stated conditions, then in fact $\chi(jk) = \chi(j)\chi(k)$. This is for the elementary reason that, for any two integers $j,k$ coprime to $m$, there is some multiple $\alpha m$ of $m$ such that $j$ and $k+\alpha m$ are relatively prime, hence $\chi(jk) = \chi(j)\chi(k+\alpha m) = \chi(j)\chi(k)$. 

The redundant clause ``if $\gcd(j,k)=1$'' is usually omitted when defining a Dirichlet character. We include the clause in our definition because it makes the comparison to Definition \ref{def of dir char of perm grp} more natural.} are satisfied:
\begin{itemize}
\item $\chi(1) = 1$, and
\item $\chi(jk) = \chi(j)\chi(k)$ if $\gcd(j,k)=1$.
\end{itemize}
Of course Dirichlet characters of modulus $m$ are equivalent to group homomorphisms $\mathbb{Z}/m\mathbb{Z}^{\times}\rightarrow\mathbb{C}^{\times}$.

Here is a combinatorial analogue of Dirichlet characters:
\begin{definition}\label{def of dir char of perm grp}
Given a permutation group $G$, a {\em Dirichlet character on $G$} is a class\footnote{Recall that a {\em class function} is a function on a group $G$ which is invariant under conjugacy in $G$. Since $\mathbb{Z}/m\mathbb{Z}$ is commutative, this condition is moot in the classical setting of Dirichlet characters in number theory.} function $\chi: G\rightarrow\mathbb{C}$ such that
\begin{itemize}
\item $\chi(1) = 1$, and
\item $\chi(\sigma_1\sigma_2) = \chi(\sigma_1)\chi(\sigma_2)$ if the permutations $\sigma_1$ and $\sigma_2$ are disjoint.
\end{itemize}
\end{definition}

It is reasonable to build up a theory of Dirichlet characters on a general permutation group $G$, but in this paper all our applications are limited to the case where $G$ is a symmetric group. From now on, we will restrict ourselves to that level of generality.

Suppose that $p$ is a positive integer, and suppose that $\chi: \Sigma_p\rightarrow\mathbb{C}$ is a Dirichlet character. Then the value of $\chi$ on an element $\sigma\in\Sigma_p$ depends entirely by the number of cycles of each length in $\chi$. That is, $\chi$ is determined by a sequence of complex numbers $z_1,z_2, \dots ,z_p$, where $z_i$ is the value of $\chi$ on an $i$-cycle. Furthermore we must have $z_1 = 1$, since $\chi(1) = 1$.

In the simplest case, these numbers are all powers of $z_2$:
\begin{definition}
We say that a Dirichlet character $\chi:\Sigma_p\rightarrow\mathbb{C}$ is {\em cyclic} if, for every $i$-cycle $\gamma_i$ and every transposition $\tau$, we have $\chi(\gamma_i) = \chi(\tau)^{i-1}$.
\end{definition}

Recall from Conventions \ref{conventions} that we use the notation $c(\sigma)$ for the number of cycles in a permutation $\sigma$, including singleton cycles.
\begin{prop}\label{characterization of cyclic chars}
Let $p$ be a positive integer. A Dirichlet character $\chi:\Sigma_p\rightarrow\mathbb{C}$ is cyclic if and only if both of the following conditions are satisfied:
\begin{enumerate} 
\item for every pair of permutations $\sigma,\sigma^{\prime}\in \Sigma_p$ such that $c(\sigma) = c(\sigma^{\prime})$, we have $\chi(\sigma) = \chi(\sigma^{\prime})$, and
\item for every transposition $\tau$ and every cycle $\gamma$ such that some element of $\{1, \dots,p\}$ fixed by $\gamma$ is not fixed by $\tau$, we have $\chi(\tau\gamma) = \chi(\tau)\chi(\gamma)$. 
\end{enumerate}
\end{prop}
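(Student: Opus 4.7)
My plan is to prove both directions separately. For the forward direction, I assume $\chi$ is cyclic and let $z = \chi(\tau)$ be the common value on transpositions (well-defined because $\chi$ is a class function and all transpositions in $\Sigma_p$ are conjugate). I would first show the closed formula $\chi(\sigma) = z^{p - c(\sigma)}$ for all $\sigma \in \Sigma_p$: write $\sigma$ as a product of its disjoint nontrivial cycles $\gamma^{(1)} \cdots \gamma^{(m)}$ of lengths $k_1, \dots, k_m$, extend the Dirichlet multiplicativity property by an easy induction to $m$-fold disjoint products, and invoke the cyclic hypothesis to get $\chi(\gamma^{(j)}) = z^{k_j - 1}$. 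The exponent telescopes to $\sum_j (k_j-1) = p - c(\sigma)$, which yields (1). For (2), if $\tau$ and $\gamma$ are disjoint then $\chi(\tau\gamma) = \chi(\tau)\chi(\gamma)$ is immediate from the Dirichlet axiom; otherwise, since $\tau$ moves an element fixed by $\gamma$, the transposition $\tau$ meets $\mathrm{supp}(\gamma)$ in exactly one element, and a direct computation shows $\tau\gamma$ is an $(i+1)$-cycle when $\gamma$ is an $i$-cycle, in which case both sides equal $z^i$.

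For the backward direction, I assume (1) and (2). Condition (1) already forces $\chi(\gamma_i)$ to depend only on $i$, because every $i$-cycle in $\Sigma_p$ has the same total cycle count $p - i + 1$; similarly $z := \chi(\tau)$ is well-defined. I would then prove $\chi(\gamma_i) = z^{i-1}$ by induction on $i$, with the cases $i=1$ and $i=2$ trivial. The inductive step hinges on the explicit factorization
\[
(a_1, a_2, \dots, a_{i+1}) \;=\; (a_1, a_{i+1}) \cdot (a_1, a_2, \dots, a_i),
\]
in which the $i$-cycle on the right fixes $a_{i+1}$, while the transposition on the left moves $a_{i+1}$. Thus the hypotheses of (2) are satisfied, and combining (2) with the inductive hypothesis gives $\chi(\gamma_{i+1}) = z \cdot z^{i-1} = z^i$, as desired.

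I do not expect serious obstacles. The one subtlety is verifying the factorization above with the correct composition-order convention, so that the product really is the $(i+1)$-cycle $(a_1, \dots, a_{i+1})$ and so that the hypothesis of (2) is genuinely met (an element fixed by the cycle factor but not by the transposition factor). Condition (2) appears to have been tailored precisely to make this one inductive step work; everything else in the argument reduces to routine bookkeeping with cycle decompositions.
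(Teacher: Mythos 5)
Your proof is correct and follows essentially the same approach as the paper: multiplicativity over the disjoint-cycle decomposition (yielding $\chi(\sigma) = z^{p-c(\sigma)}$) for the forward direction, and an induction on cycle length using condition (2) for the backward direction. The only cosmetic difference is your choice of factorization in the inductive step --- you peel off the transposition $(a_1, a_{i+1})$ sharing the \emph{first} support point of the cycle, whereas the paper factors $(1,\dots,i+1)$ as $(1,\dots,i)$ times $(i,i+1)$, sharing the \emph{last} --- and both choices equally satisfy the hypothesis of condition (2).
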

The second condition in Proposition \ref{characterization of cyclic chars} is difficult to parse, but is much clearer from examples. The condition asserts that, for example, $\chi((123)(35)) = \chi((123))\chi((35))$, since the element $5\in \{ 1, \dots 5\}$ is fixed by $(123)$ but not fixed by $(35)$. Similarly, the second condition asserts that $\chi((123)(45)) = \chi((123))\chi((45))$. The second condition does {\em not} assert that $\chi((123)(13)) = \chi((123))\chi((13))$.
\begin{proof}[Proof of Proposition \ref{characterization of cyclic chars}]
Suppose $\chi$ is cyclic. Write $\sigma\in \Sigma_p$ as a product $\sigma = \tau_1 \dots \tau_{c(\sigma)}$ of disjoint cycles, including singletons. Write $\ell(i)$ for the length of the cycle $\tau_i$. Then $\chi(\sigma) = \prod_{i=1}^{c(\sigma)} \chi(\tau_i) = \prod_{i=1}^{c(\sigma)} \chi((12))^{\ell(i) - 1} = \chi((12))^{p-c(\sigma)}$. Hence the value of $\chi$ on a permutation $\sigma$ depends only on $c(\sigma)$ and on $\chi((12))$, so the first condition is satisfied.

Given a transposition $\tau$ and a cycle $\gamma$ satisfying the constraints of the second condition, there are two possibilities: either $\tau$ and $\gamma$ are disjoint, or they are not. We handle the cases separately:
\begin{description}
\item[If $\tau,\gamma$ are disjoint] then $\chi(\tau\gamma) = \chi(\tau)\chi(\gamma)$ by the definition of a Dirichlet character on a permutation group.
\item[If $\tau,\gamma$ are not disjoint] then without loss of generality we may assume that $\gamma = (1...n)$ and $\tau = (1,n+1)$ for some $n$. We then have
\begin{align*}
 \chi(\gamma\tau) 
  &= \chi((1,...,n+1)) \\
  &= \chi(\tau)^n \\
  &= \chi(\tau)^{n-1}\chi(\tau) \\
  &= \chi((1...n))\chi((1,n+1)),
\end{align*}
as desired.
\end{description}

For the converse: suppose the two conditions in the statement of the proposition are satisfied. By the first condition, to verify that $\chi$ is cyclic, it suffices to verify that $\chi((1...n)) = \chi((12))^{n-1}$ for each $n= 1, \dots ,p$. 
This follows from a straightforward induction:
\begin{itemize}
\item $\chi((123)) = \chi((12))\chi((23)) = \chi((12))^2$,
\item $\chi((1234)) = \chi((123))\chi((34)) = \chi((12))^3$,
\item $\chi((12345)) = \chi((1234))\chi((45)) = \chi((12))^4$,
\item and so on, with the right-hand equalities provided by the second condition in the statement of the proposition.
\end{itemize}
\end{proof}

Proposition \ref{characterization of cyclic chars} offers a way for us to recognize whether a given Dirichlet character is cyclic. But if one simply wants a list of all cyclic Dirichlet characters, this is easier: to specify a cyclic Dirichlet character $\chi$ on $\Sigma_p$, one simply gives the value of $\chi$ on any transposition in $\Sigma_p$. This value can be any complex number. Hence there is precisely one cyclic Dirichlet character on $\Sigma_p$ for each complex number. 
\begin{definition}
Given a positive integer $p$ and an element $z\in \mathbb{C}^{\times}$, we will write $\chi_z$ for the unique cyclic Dirichlet character $\Sigma_p\rightarrow\mathbb{C}$ which sends a transposition to $z$.
\end{definition}

The signless Stirling number of the first kind, $c(p,k)$, counts the number of elements of $\Sigma_p$ which have precisely $k$ cycles, including singleton cycles in the count. Consequently the average value $\avg(\chi)$ of a cyclic Dirichlet character $\chi$ on the symmetric group $\Sigma_p$ is
given by the formula
\begin{align*}
 \frac{1}{p!} \sum_{\sigma\in \Sigma_p} \chi(\sigma)
  &= \frac{1}{p!} \sum_{k=1}^p c(p,k) \sum_{c(\sigma)=k} \chi(\sigma) \\
\nonumber  &= \frac{1}{p!} \sum_{k=1}^p c(p,k) \chi(\tau)^{p-k} 
\end{align*}
where $\tau$ is any transposition in $\Sigma_p$.
As a simple consequence, 
\begin{align}\label{averaging formula 1}
 \frac{\avg(\chi)}{\chi(\tau)^p} 
  &= \frac{1}{p!} \sum_{k=1}^p c(p,k) \chi(\tau)^{-k} \\
\nonumber &= \frac{(\chi(\tau)^{-1})^{\overline{p}}}{p!},
\end{align}
where $z^{\overline{p}}$ is the rising factorial $z(z+1)(z+2)\dots (z+p-1)$. 

We shall have need of a ``twisted''
two-character analogue of the formula \eqref{averaging formula 1}:
\begin{definition}\label{def of twisted product of dir chars}
Fix positive integers $p$ and $q$. Given cyclic Dirichlet characters $\chi: \Sigma_p\rightarrow\mathbb{C}$ and $\chi^{\prime}: \Sigma_q\rightarrow \mathbb{C}$, we define $((\chi,\chi^{\prime}))$ to be the complex number 
\begin{align*}
 ((\chi,\chi^{\prime}))
  &= \frac{1}{p! q!} \sum_{k=1}^p \sum_{\ell=1}^q c(p,k)c(q,\ell) \chi(\tau)^{-k\ell} \chi^{\prime}(\tau^{\prime})^{-k},\\
  &= \frac{1}{p! q!} \sum_{\alpha\in \Sigma_p} \sum_{\beta\in \Sigma_q} \chi(\tau)^{-c(\alpha) c(\beta)} \chi^{\prime}(\tau^{\prime})^{-c(\alpha)},
\end{align*} 
where $\tau$ is any transposition in $\Sigma_p$, and $\tau^{\prime}$ is any transposition in $\Sigma_q$.
\end{definition}

Here is an extremely elementary lemma:
\begin{lemma}\label{am-gm lemma}
Let $a,b$ be positive integers. Then $a^{\overline{b}} \leq (a+\frac{b-1}{2})^b$.
\end{lemma}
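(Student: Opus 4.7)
The plan is to apply the classical AM--GM inequality to the $b$ factors appearing in the rising factorial. By definition, $a^{\overline{b}} = a(a+1)(a+2)\cdots(a+b-1)$ is the product of $b$ positive real numbers whose arithmetic mean is
\[
\frac{a + (a+1) + \cdots + (a+b-1)}{b} = \frac{ba + \binom{b}{2}}{b} = a + \frac{b-1}{2}.
\]

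First, I would invoke the AM--GM inequality, which states that the geometric mean of $b$ positive reals is at most their arithmetic mean. Applied to $a, a+1, \dots, a+b-1$, this gives
\[
\bigl(a^{\overline{b}}\bigr)^{1/b} = \bigl( a(a+1)\cdots(a+b-1) \bigr)^{1/b} \leq a + \frac{b-1}{2}.
\]
Raising both sides to the $b$-th power yields the claimed inequality $a^{\overline{b}} \leq \bigl(a + \frac{b-1}{2}\bigr)^b$.

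There is essentially no obstacle here; the lemma is a one-line consequence of AM--GM once one recognizes that the $b$ consecutive integers starting at $a$ have arithmetic mean $a + \frac{b-1}{2}$. The author's own phrasing (``Here is an extremely elementary lemma'') confirms that nothing more subtle is intended.
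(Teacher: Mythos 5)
Your proof is correct and takes exactly the same route as the paper: apply AM--GM to the $b$ consecutive integers $a, a+1, \dots, a+b-1$, note that their arithmetic mean is $a + \frac{b-1}{2}$, and raise to the $b$-th power.
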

\begin{proof}
The $b$th root of the rising factorial $a^{\overline{b}}$ is the geometric mean of the integers $a,\ a+1,\ \dots ,\ a+b-1$, hence is bounded above by the arithmetic mean of those numbers, which is $a+\frac{b-1}{2}$.
\end{proof}

The following proposition establishes that, as $p\rightarrow\infty$, the twisted product of cyclic Dirichlet characters $((\chi_{1/2},\chi_{2^{(p+k)/2}}))$ grows asymptotically no faster than $2^{p(p+k)/2}/(p!(p+k)!)$. This asymptotic estimate is used in Corollary \ref{growth cor} and Theorem \ref{main cor}.
\begin{prop}\label{dir char product bound}
Let $p$ be a positive integer and let $k$ be a nonnegative integer. Then we have an inequality:
\begin{align*}
\lim_{p\rightarrow \infty} \frac{((\chi_{1/2},\chi_{2^{(p+k)/2}}))\cdot p!(p+k)!}{2^{p(p+k)/2}} &\leq 1.
\end{align*}
\end{prop}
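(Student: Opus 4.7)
The plan is to unfold $((\chi_{1/2},\chi_{2^{(p+k)/2}}))$ using Definition \ref{def of twisted product of dir chars}: writing $q=p+k$ for brevity, substituting $\chi_{1/2}(\tau)=1/2$ and $\chi_{2^{q/2}}(\tau')=2^{q/2}$, and using the classical identity $\sum_{\beta\in\Sigma_q}x^{c(\beta)}=x^{\overline{q}}$ (the generating function for signless Stirling numbers of the first kind) on the inner sum, the claim reduces to showing
$$\limsup_{p\rightarrow\infty}\frac{1}{2^{pq/2}}\sum_{\alpha\in\Sigma_p}2^{-c(\alpha)q/2}(2^{c(\alpha)})^{\overline{q}}\leq 1.$$
The identity permutation should carry the entire main term: for $\alpha=e$, $c(\alpha)=p$, and the contribution is $(2^p)^{\overline{q}}/2^{pq}=\prod_{j=0}^{q-1}(1+j/2^p)\leq\exp(q(q-1)/2^{p+1})$, which tends to $1$ since $q=p+k$ forces $q^2/2^p\rightarrow 0$.

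The main obstacle is showing that the sum over $\alpha\neq e$ is $o(1)$. My plan is to split this sum into two regimes by the size of $c(\alpha)$. In the ``many cycles'' regime $c(\alpha)\geq 2\log_2 q$, the elementary bound $(2^c)^{\overline{q}}\leq 2^{cq}\exp(q(q-1)/2^{c+1})\leq 2\cdot 2^{cq}$ gives a term-by-term estimate of $2\cdot 2^{(c(\alpha)-p)q/2}$; summing (crudely extending the set of indices to all $\alpha\neq e$, which is legitimate since every summand is nonnegative) and applying the Stirling identity again yields at most $2\bigl((2^{q/2})^{\overline{p}}/2^{pq/2}-1\bigr)=O(p^2/2^{q/2})\rightarrow 0$. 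In the complementary ``few cycles'' regime $c(\alpha)<2\log_2 q$, I invoke Lemma \ref{am-gm lemma} to get $(2^c)^{\overline{q}}\leq(2^c+(q-1)/2)^q$; dividing by $2^{(p+c)q/2}$, the base $2^{(c-p)/2}+(q-1)/2^{(p+c+2)/2}$ has each summand at most $q/2^{p/2}$ in this range, so each term is at most $(2q/2^{p/2})^q$. Multiplying by the trivial count $p!$, the total contribution from this regime is at most $p!(2q)^q/2^{pq/2}$, whose logarithm is $2p\log p-(p^2\log 2)/2+O(p)\rightarrow-\infty$.

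Combining the three parts, the identity gives $1+o(1)$ and the non-identity contributions give $o(1)$, establishing $\limsup\leq 1$. Beyond the Stirling identity and standard exponential estimates for products of the form $\prod(1+j/X)$, the only serious input is the AM-GM estimate of Lemma \ref{am-gm lemma}; the real work is organizing the case split so that the AM-GM bound controls small-$c(\alpha)$ tails while the product bound handles large-$c(\alpha)$ tails, with both bounds landing comfortably in $o(1)$ because $q=p+k$ is small relative to $2^{p/2}$.
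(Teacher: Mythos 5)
Your proof is correct, but it takes a genuinely different route from the paper's. Both begin by unfolding the definition and applying the Stirling identity $\sum_{\sigma\in\Sigma_n} x^{c(\sigma)} = x^{\overline{n}}$ to collapse one of the two sums, but the paper sums over $\alpha$ first (obtaining $(2^{j-q/2})^{\overline{p}}$ inside a sum over $j=c(\beta)$) whereas you sum over $\beta$ first. The paper then applies Lemma~\ref{am-gm lemma} uniformly, expands $(2^{j-q/2}+\frac{p-1}{2})^p$ by the binomial theorem, switches the order of summation to apply the Stirling identity a second time, isolates the $h=p$ binomial term (which gives $(2^p)^{\overline{q}}/(2^p)^q \rightarrow 1$), and disposes of the remaining terms via Tannery's theorem. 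Your proof instead isolates the identity-permutation term directly and splits the rest of the sum over $\alpha\neq e$ by the size of $c(\alpha)$: in the ``many cycles'' regime you use the elementary bound $\prod_{j<q}(1+j/2^c)\leq\exp(q(q-1)/2^{c+1})\leq 2$ together with a second Stirling identity, and in the ``few cycles'' regime you use Lemma~\ref{am-gm lemma} together with the crude count of at most $p!$ terms. Thus AM-GM plays a different role in each proof (a global replacement of the rising factorial by a $p$th power to enable the binomial expansion in the paper, versus a bound used only on the small-$c(\alpha)$ tail in yours), and your two-regime estimate replaces the Tannery/dominated-convergence step. Both arguments succeed; yours arguably makes it more transparent that the identity permutation alone carries the main term and why everything else is $o(1)$, at the cost of a somewhat more delicate threshold choice $c(\alpha)\geq 2\log_2 q$.
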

\begin{proof}
From elementary algebraic manipulation:
\begin{align}
\nonumber \lim_{p\rightarrow \infty} \frac{((\chi_{1/2},\chi_{2^{(p+k)/2}}))\cdot p!(p+k)!}{2^{p(p+k)/2}}
\nonumber  &= \lim_{p\rightarrow \infty} \frac{\sum_{i=0}^p\sum_{j=0}^{p+k} c(p,i) c(p+k,j) (2^{j-(p+k)/2})^i}{2^{p(p+k)/2}} \\
\nonumber  &= \lim_{p\rightarrow \infty} \frac{\sum_{j=0}^{p+k} c(p+k,j) (2^{j-(p+k)/2})^{\overline{p}}}{2^{p(p+k)/2}} \\
\label{ineq 001}  &\leq \lim_{p\rightarrow \infty} \frac{\sum_{j=0}^{p+k} c(p+k,j) (2^{j-(p+k)/2} + \frac{p-1}{2})^p}{2^{p(p+k)/2}} \\
\nonumber  &= \lim_{p\rightarrow \infty} \frac{\sum_{j=0}^{p+k} c(p+k,j) (2^{j} + \frac{p-1}{2}2^{(p+k)/2})^p}{2^{p(p+k)}} \\
\nonumber  &= \lim_{p\rightarrow \infty} \frac{\sum_{j=0}^{p+k} c(p+k,j) \sum_{h=0}^p \binom{p}{h} (2^h)^j\left( \frac{p-1}{2} 2^{(p+k)/2}\right)^{p-h}}{2^{p(p+k)}} \\
\nonumber  &= \lim_{p\rightarrow \infty} \frac{\sum_{h=0}^p \binom{p}{h} \left( \frac{p-1}{2} 2^{(p+k)/2}\right)^{p-h}(2^h)^{\overline{p+k}}}{2^{p(p+k)}} \\
\nonumber  &= \lim_{p\rightarrow \infty}\left( \frac{(2^p)^{\overline{p+k}}}{(2^p)^{p+k}} + 
\frac{\sum_{h=0}^{p-1} \binom{p}{h} \left( \frac{p-1}{2} 2^{(p+k)/2}\right)^{p-h}(2^h)^{\overline{p+k}}}{2^{p(p+k)}}\right) \\
\label{eq 004}  &= \lim_{p\rightarrow \infty} \frac{(2^p)^{\overline{p+k}}}{(2^p)^{p+k}} + \lim_{p\rightarrow \infty}\sum_{h=0}^{\infty} a_{h,p},
\end{align}
where inequality \eqref{ineq 001} is due to Lemma \ref{am-gm lemma}, and where $a_{h,p}$ is the real number defined as follows:
\begin{align*}
 a_{h,p} &= \left\{ \begin{array}{ll}
  0 &\mbox{\ if\ } h<0 \\
  \binom{p}{h} \left( \frac{p-1}{2} 2^{(p+k)/2}\right)^{p-h}(2^h)^{\overline{p+k}}/2^{p(p+k)} &\mbox{\ if\ } 0\leq h< p\\
  0 &\mbox{\ if\ } h\geq p .\end{array}\right.
\end{align*}
It is easy to see that $\lim_{p\rightarrow \infty} a_{h,p} = 0$ for each integer $h$. Hence, if we can exchange the limit with the sum in \eqref{eq 004}, then \eqref{eq 004} will be equal to $\lim_{p\rightarrow \infty} \frac{(2^p)^{\overline{p+k}}}{(2^p)^{p+k}}$, i.e., equal to $1$, as desired.

It is a straightforward matter of dominated convergence to exchange the limit with the sum in \eqref{eq 004}, as follows. There exists some positive integer $H_k$ such that, for all $h\geq H_k$, the largest value of the sequence $a_{h,1},a_{h,2},a_{h,3}, \dots$ is its first nonzero term, i.e., $a_{h,h+1} = \frac{(h+1)h}{2} 2^{(h+1+k)(-h-1/2)}2^{\overline{h+1+k}}$. (Explicit calculation shows that $H_0 = 12$ suffices, while $H_1 = 10$ and $H_2 = 7$ and $H_k = 1$ for all $k\geq 3$ also suffice.) Since $\lim_{h\rightarrow \infty} a_{h+1,h+2}/a_{h,h+1} = 0$ by straightforward calculation, the sum $\sum_{h\geq H_k} a_{h,h+1}$ converges. Hence by Tannery's theorem, we get the equality \eqref{dct eq 1} in the chain of equalities
\begin{align}
\nonumber 0 
  &= \sum_{h=0}^{\infty}\lim_{p\rightarrow \infty} a_{h,p} \\
\nonumber  &= \lim_{p\rightarrow \infty}\sum_{h=0}^{H_k-1} a_{h,p} + \sum_{h=H_k}^{\infty} \lim_{p \rightarrow \infty} a_{h,p} \\
\label{dct eq 1} &= \lim_{p\rightarrow \infty}\sum_{h=0}^{H_k-1} a_{h,p} + \lim_{p\rightarrow \infty}\sum_{h=H_k}^{\infty} a_{h,p} \\
\nonumber  &= \lim_{p\rightarrow \infty}\sum_{h=0}^{\infty} a_{h,p},
\end{align}
as desired.
\end{proof}

\section{The cycle form.}
\label{cycle form section}


We introduce a bit of notation:
\begin{definition}
Given $\beta\in \Sigma_q$ and a positive integer $r$, write $c_r(\beta)$ for the number of $r$-cycles in the expression of $\beta$ as a product of disjoint cycles. 
\end{definition}

\begin{definition}
Given permutation groups $G,H$, by the {\em cycle form} we mean the bilinear form $\langle -,-\rangle: \mathbb{Z}[G]\otimes_{\mathbb{Z}} \mathbb{Z}[H] \rightarrow \mathbb{Z}$ given by the formula
\begin{align*}
 \langle g,h \rangle &= \sum_{r,s\geq 1} \gcd(r,s)\cdot c_r(\alpha)\cdot c_s(\beta).
\end{align*}
\end{definition}
The case of immediate relevance is the case $G = \Sigma_p$ and $H = \Sigma_q$, so that the formula \eqref{harary formula} reduces to 
\begin{align} 
 \left|B_u(p,q)\right| &= \frac{1}{p!q!} \sum_{\alpha\in \Sigma_p}\sum_{\beta\in \Sigma_q} 2^{\langle\alpha,\beta\rangle}.
\end{align}

\begin{example}
Let $\beta\in \Sigma_q$. For the identity element $1\in\Sigma_p$, we have $\langle 1,\beta\rangle = p\cdot c(\beta)$. In particular, $\langle 1,1\rangle = pq$.
 If $\alpha\in\Sigma_p$ is an $\ell$-cycle with $\ell$ prime, then
\begin{align}\label{bracket for a cycle} \langle \alpha,\beta\rangle &= \langle 1,\beta\rangle - (\ell-1) \sum_{\ell \nmid s} c_s(\beta).
\end{align}
\end{example}

We develop a few basic properties of the cycle form. Whenever convenient, we will restrict to the case where the permutation groups are symmetric groups. 

We begin by showing that the cycle form is degenerate. Usually one wants bilinear forms to be nondegenerate, but the degenerateness of the cycle form is actually very useful, and in Lemma \ref{decomp eq lemma 2} we will see that it is key to the computability of the cycle form.
\begin{lemma}\label{decomp eq lemma}
Let $\alpha,\alpha^{\prime}$ be disjoint elements of a permutation group $G$. Then $(1 - \alpha)(1 - \alpha^{\prime})$ is in the radical of the cycle form. That is, for any $\beta$ in any permutation group $H$, we have
\begin{align}\label{decomp eq}
 \langle (1 - \alpha)(1 - \alpha^{\prime}),\beta\rangle &= 0.
\end{align}
\end{lemma}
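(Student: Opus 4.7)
The plan is to expand everything bilinearly and reduce the identity to a purely combinatorial statement about cycle counts. Since the cycle form is bilinear in both arguments, expanding $(1-\alpha)(1-\alpha') = 1 - \alpha - \alpha' + \alpha\alpha'$ gives
\begin{align*}
\langle (1-\alpha)(1-\alpha'), \beta \rangle
 &= \sum_{r,s\geq 1} \gcd(r,s)\bigl[c_r(1) - c_r(\alpha) - c_r(\alpha') + c_r(\alpha\alpha')\bigr]\, c_s(\beta).
\end{align*}
Since $\beta$ is arbitrary, it suffices to show that for every $r\geq 1$,
\begin{align*}
c_r(\alpha\alpha') + c_r(1) = c_r(\alpha) + c_r(\alpha').
\end{align*}

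To prove this cycle-count identity, I would work on a fixed set $\{1,\dots,n\}$ on which $G\subseteq \Sigma_n$ acts, and split it into three disjoint pieces: the support $A$ of $\alpha$, the support $A'$ of $\alpha'$, and the common fixed-point set $F = \{1,\dots,n\} \setminus (A \cup A')$. Because $\alpha$ and $\alpha'$ are disjoint, $A$ and $A'$ are disjoint, and the permutation $\alpha\alpha'$ acts as $\alpha$ on $A$, as $\alpha'$ on $A'$, and trivially on $F$. Writing $\tilde c_r(\alpha)$ for the number of $r$-cycles of $\alpha$ with support inside $A$ (and similarly $\tilde c_r(\alpha')$), the bookkeeping for $r\geq 2$ is trivial — both sides equal $\tilde c_r(\alpha) + \tilde c_r(\alpha')$. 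The only case that needs attention is $r=1$, where fixed points contribute to $c_1$: we compute
\begin{align*}
c_1(\alpha) &= \tilde c_1(\alpha) + |A'| + |F|, \\
c_1(\alpha') &= \tilde c_1(\alpha') + |A| + |F|, \\
c_1(\alpha\alpha') &= \tilde c_1(\alpha) + \tilde c_1(\alpha') + |F|, \\
c_1(1) &= |A| + |A'| + |F|,
\end{align*}
and observe that the sums on each side of the identity agree.

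The main obstacle is simply keeping the $r=1$ bookkeeping straight, since all three regions $A$, $A'$, and $F$ contribute fixed points to the various permutations in different ways; the identity works precisely because the ``missing'' fixed points on one side are exactly accounted for by the presence of $c_r(1)$. No deeper ingredient is needed: once this linear identity in $c_r$ is established, the bilinear expansion at the top of the proof completes the argument.
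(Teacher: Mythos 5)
Your proof is correct and uses essentially the same idea as the paper's: both reduce, by bilinearity, to the cycle-count identity $c_r(\alpha\alpha') + c_r(1) = c_r(\alpha) + c_r(\alpha')$, which for $r\geq 2$ is trivial by disjointness and for $r=1$ is the paper's observation $c_1(\alpha\alpha') = c_1(\alpha) - (p - c_1(\alpha'))$. (One small point: since $A$ is the support of $\alpha$, the quantity $\tilde c_1(\alpha)$ you carry around is automatically zero; it does no harm, but it could be dropped.)
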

\begin{proof}
Since $\alpha$ and $\alpha^{\prime}$ are disjoint, the number of fixed points of $\alpha\alpha^{\prime}$ is equal to the number of fixed points of $\alpha$ minus the number of {\em non}-fixed points of $\alpha^{\prime}$, i.e., 
\begin{align*}
 c_1(\alpha\alpha^{\prime}) &= c_1(\alpha) - (p-c_1(\alpha^{\prime})).
\end{align*}
Hence we have
\begin{align*}
 \langle \alpha \alpha^{\prime},\beta\rangle
  &= \sum_{r,s} \gcd(r,s) c_r(\alpha\alpha^{\prime}) c_s(\beta) \\
  &= \sum_{r>1} \sum_s \gcd(r,s) (c_r(\alpha) + c_r(\alpha^{\prime}))c_s(\beta)  + \sum_s (c_1(\alpha) + c_1(\alpha^{\prime}) - p)c_s(\beta) \\
  &= \langle \alpha,\beta\rangle + \langle \alpha^{\prime},\beta\rangle - \langle 1,\beta\rangle,
\end{align*}
and \eqref{decomp eq} follows.
\end{proof}

\begin{lemma}\label{decomp eq lemma 2}
Let $p,q$ be positive integers, and let $\alpha,\beta$ be elements of the symmetric groups $\Sigma_p$ and $\Sigma_q$, respectively. 
Then we have an equality
\begin{align}\label{decomp eq 2}
 \langle \alpha,\beta\rangle
  &= \sum_{j=1}^p c_j(\alpha)\langle \gamma_j, \beta\rangle + (1-c(\alpha))\cdot p\cdot c(\beta),
\end{align}
where $\gamma_j$ is any $j$-cycle in $\Sigma_p$.
\end{lemma}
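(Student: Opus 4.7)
The plan is to peel off the cycles of $\alpha$ one at a time, using the degeneracy relation supplied by Lemma~\ref{decomp eq lemma}. Write $\alpha = \tau_1\tau_2\cdots\tau_{c(\alpha)}$ as a product of its disjoint cycles, including one singleton factor $\tau_i$ for each fixed point of $\alpha$, so that the number of factors matches $c(\alpha)$. For each $n$, the support of $\tau_1\cdots\tau_{n-1}$ is contained in the union of the supports of $\tau_1,\ldots,\tau_{n-1}$, so it is disjoint from $\tau_n$. Expanding $\langle (1-\tau_1\cdots\tau_{n-1})(1-\tau_n),\beta\rangle = 0$ therefore yields
\[
\langle \tau_1\cdots\tau_n,\beta\rangle = \langle \tau_1\cdots\tau_{n-1},\beta\rangle + \langle \tau_n,\beta\rangle - \langle 1,\beta\rangle,
\]
and a straightforward induction on $n$ gives
\[
\langle \alpha,\beta\rangle \;=\; \sum_{i=1}^{c(\alpha)} \langle \tau_i,\beta\rangle \;-\; (c(\alpha)-1)\,\langle 1,\beta\rangle.
\]

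Two short observations then convert this into \eqref{decomp eq 2}. First, $\langle 1,\beta\rangle = p\cdot c(\beta)$, as recorded in the example preceding the lemma. Second, the defining formula for the cycle form depends on its first argument only through the cycle-type counts $c_r$, so if $\tau_i$ is an $\ell$-cycle then $\langle \tau_i,\beta\rangle = \langle \gamma_\ell,\beta\rangle$. Regrouping the factors by cycle length turns $\sum_i \langle \tau_i,\beta\rangle$ into $\sum_{j=1}^p c_j(\alpha)\,\langle \gamma_j,\beta\rangle$, and substituting while writing $-(c(\alpha)-1) = 1-c(\alpha)$ delivers \eqref{decomp eq 2}.

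I do not foresee any genuine obstacle: the argument is essentially an induction fueled by the degeneracy result of Lemma~\ref{decomp eq lemma}, together with the fact that $\langle -,\beta\rangle$ is a cycle-type invariant on its first argument. The only point demanding care is the bookkeeping of singleton cycles. They must be counted among the $\tau_i$ so that the $(c(\alpha)-1)$ produced by the induction matches the $(1-c(\alpha))$ on the right-hand side of \eqref{decomp eq 2}; omitting them would leave an uncompensated correction of $c_1(\alpha)\,p\,c(\beta)$.
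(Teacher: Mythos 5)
Your proposal is correct and follows essentially the same route as the paper's own proof: decompose $\alpha$ into disjoint cycles including singleton fixed points, apply Lemma~\ref{decomp eq lemma} inductively to obtain $\langle \alpha,\beta\rangle = \sum_i \langle \tau_i,\beta\rangle - (c(\alpha)-1)\langle 1,\beta\rangle$, then regroup by cycle length and substitute $\langle 1,\beta\rangle = p\cdot c(\beta)$. The only cosmetic difference is that you peel off the last cycle at each inductive step while the paper peels off the first, and you are a bit more explicit about the cycle-type invariance of $\langle -,\beta\rangle$ and the accounting of singleton cycles.
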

\begin{proof}
Write $\alpha$ as a product of disjoint cycles, $\alpha = \alpha_1\alpha_2\dots \alpha_{c(\alpha)}$, including singleton cycles. Apply Lemma \ref{decomp eq lemma} repeatedly:
\begin{align*}
 \langle \alpha,\beta\rangle 
  &= \langle \alpha_1,\beta\rangle + 
  \langle \alpha_2\alpha_3\dots \alpha_{c(\alpha)},\beta\rangle - \langle 1,\beta\rangle \\
  &= \langle \alpha_1,\beta\rangle + 
     \langle \alpha_2,\beta\rangle + 
  \langle \alpha_3\dots \alpha_{c(\alpha)},\beta\rangle - 2\langle 1,\beta\rangle \\
  &= \dots \\
  &= \sum_{k=1}^{c(\alpha)} \langle \alpha_k,\beta\rangle - (c(\alpha)-1)\langle 1,\beta\rangle,\end{align*}
which is equal to the right-hand side of \eqref{decomp eq 2}.
\end{proof}

Our next task is to bound the value of $\langle 1 - \alpha,\beta\rangle$.
The bound $\langle 1 - \alpha,\beta\rangle \geq 0$ is straightforward to see, and when $\ell$ is prime, it is a trivial consequence of \eqref{bracket for a cycle}. 
A more interesting bound is obtained as follows. Suppose that $\ell$ is a positive integer. A quantity of basic interest is the difference $c(\beta) - \frac{q}{\ell}$, since it is positive if and only if the average length of a cycle in $\beta$, including length $1$ cycles, is greater than $\ell$. One of the fundamental properties of the cycle form is that it is bounded by the quantity $c(\beta) - \frac{q}{\ell}$, in the following sense:
\begin{lemma}\label{bound lemma 1a}
Let $\ell$ be a positive integer, and let $\gamma_{\ell} \in \Sigma_p$ be an $\ell$-cycle. Then, for all $\beta\in \Sigma_q$, we have
\begin{align}\label{bound 1a}
\langle 1 - \gamma_{\ell},\beta\rangle \geq (\ell-1) \left( c(\beta) - \frac{q}{\ell}\right).
\end{align}
\end{lemma}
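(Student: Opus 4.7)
The plan is to reduce the inequality to a clean pointwise statement about $\gcd(\ell,s)$ versus $s$, and then sum over $s$.

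First I would expand both sides of $\langle 1-\gamma_\ell,\beta\rangle$ directly from the definition. Since $\gamma_\ell$ is an $\ell$-cycle in $\Sigma_p$, we have $c_\ell(\gamma_\ell)=1$, $c_1(\gamma_\ell)=p-\ell$, and $c_r(\gamma_\ell)=0$ for other $r$. A one-line calculation gives
\begin{align*}
\langle 1-\gamma_\ell,\beta\rangle \;=\; p\cdot c(\beta) - \sum_s \gcd(\ell,s)\,c_s(\beta) - (p-\ell)\,c(\beta) \;=\; \ell\cdot c(\beta) - \sum_s \gcd(\ell,s)\,c_s(\beta).
\end{align*}

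Next I would rewrite the desired inequality \eqref{bound 1a}: after moving everything to one side and dividing by the sign, it becomes
\begin{align*}
\sum_s \bigl(\gcd(\ell,s)-1\bigr)\,c_s(\beta) \;\leq\; \frac{(\ell-1)q}{\ell}.
\end{align*}
Since $\sum_s s\cdot c_s(\beta)=q$, this will follow from the pointwise inequality
\begin{align*}
\gcd(\ell,s)-1 \;\leq\; \frac{(\ell-1)s}{\ell}
\end{align*}
for every positive integer $s$. This is the only genuine content of the lemma.

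To verify the pointwise inequality, write $d=\gcd(\ell,s)$, so that $d\mid s$ (in particular $s\geq d$) and $d\leq \ell$. Clearing denominators, the inequality becomes $\ell d-\ell\leq (\ell-1)s$; since $s\geq d$, it suffices to check $\ell d-\ell\leq (\ell-1)d$, which simplifies to $d\leq \ell$, which holds by definition of $d$. Summing $(\gcd(\ell,s)-1)c_s(\beta)\leq \tfrac{\ell-1}{\ell}\cdot s\,c_s(\beta)$ over $s$ and using $\sum_s s\,c_s(\beta)=q$ then completes the proof.

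There is no real obstacle here: the main conceptual step is recognizing that the whole inequality reduces to a divisibility fact about $\gcd(\ell,s)$, and the bookkeeping with $c_1(\gamma_\ell)=p-\ell$ is just to cancel the $p\cdot c(\beta)$ term. The appearance of $q/\ell$ on the right-hand side is naturally explained by the bound $\gcd(\ell,s)\leq \frac{(\ell-1)s}{\ell}+1$, which is tight precisely when $s=\ell$.
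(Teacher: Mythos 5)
Your proof is correct, and it reaches the same core inequality as the paper, namely $\sum_s (\gcd(\ell,s)-1)c_s(\beta)\leq \frac{(\ell-1)q}{\ell}$. The difference is in how you justify it. The paper relaxes the integer vector $(c_1(\beta),\dots,c_q(\beta))$ to a nonnegative real vector $(x_1,\dots,x_q)$ subject to $\sum_s s\,x_s=q$, and asserts that the linear functional $\sum_s(\gcd(\ell,s)-1)x_s$ is maximized at the vertex $x_\ell=q/\ell$, leaving the verification to the reader as ``elementary.'' You instead prove the pointwise coefficient bound $\gcd(\ell,s)-1\leq\frac{(\ell-1)s}{\ell}$ directly (via $d=\gcd(\ell,s)$, $d\leq s$, $d\leq\ell$) and then sum against $c_s(\beta)$ using $\sum_s s\,c_s(\beta)=q$. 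This makes explicit precisely the fact the paper's LP argument depends on (that the ratio $\frac{\gcd(\ell,s)-1}{s}$ is maximized at $s=\ell$), so your version is a bit more self-contained and avoids the detour through a continuous relaxation; the paper's version buys a slightly more conceptual framing of the bound as an optimization, at the cost of an unproved verification step. Either way, the bookkeeping reduction from $\langle 1-\gamma_\ell,\beta\rangle$ to the gcd sum is the same in both.
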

\begin{proof}
Unpacking definitions, we have equalities
\begin{align}
\nonumber \left( (\ell - 1) \left( c(\beta) - \frac{q}{\ell}\right)\right) - \langle 1 - \gamma_{\ell} ,\beta\rangle 
  &=\left( \sum_s (p-\ell + \gcd(s,\ell)-p)c_s(\beta)\right)
   +\left( \sum_s (\ell-1)c_s(\beta)\right) - \frac{q(\ell-1)}{\ell} \\
\label{eq 3409f}  &=\left( \sum_s \left( \gcd(s,\ell) - 1\right)c_s(\beta)\right) - \frac{q(\ell-1)}{\ell}.
\end{align}
We claim that the sum $\sum_s \left( \gcd(s,\ell) - 1\right)c_s(\beta)$ is less than or equal to $\frac{q(\ell-1)}{\ell}$. The way to see this is to regard it as an optimization question: the numbers $c_1(\beta),c_2(\beta), \dots, c_q(\beta)$ must be nonnegative {\em integers} with the property that \begin{align*} \sum_j j\cdot c_j(\beta) &= q.\end{align*} Whichever choice of such integers $c_1(\beta),c_2(\beta), \dots, c_q(\beta)$ maximizes the sum\linebreak $\sum_s \left( \gcd(s,\ell) - 1\right)c_s(\beta)$, it can be no greater than the greatest possible value of the sum $\sum_s \left( \gcd(s,\ell) - 1\right)x_s$ where $x_1, \dots ,x_q$ are required only to be nonnegative {\em real numbers} satisfying $\sum_j j\cdot x_j = q$. It is elementary to verify that the choice of real numbers $x_1, \dots ,x_q$ satisfying those constraints, and maximizing the value of $\sum_s \left( \gcd(s,\ell) - 1\right)x_s$, is the choice given by letting $x_{\ell} = q/\ell$ and letting $x_s = 0$ for all $s\neq \ell$. In that case, the sum $\sum_s \left( \gcd(s,\ell) - 1\right)x_s$ is equal to $\frac{q(\ell-1)}{\ell}$. 

Hence $\sum_s \left( \gcd(s,\ell) - 1\right)c_s(\beta)$ can be no greater than $\frac{q(\ell-1)}{\ell}$, i.e., \eqref{eq 3409f} is negative. The bound \eqref{bound 1a} follows.
\end{proof}

\begin{lemma}\label{bound lemma 5}
Let $p$ be a positive integer, and let $\alpha\in\Sigma_p$. Then we have an inequality
\begin{align}
\label{ineq 0495}
 \sum_{j=1}^p \frac{c_j(\alpha)}{j} 
  &\geq \frac{c(\alpha) - p}{2}.
\end{align}
\end{lemma}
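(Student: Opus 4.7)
The plan is to reduce the claim to a term-by-term comparison using the two basic identities that govern the cycle structure of $\alpha\in\Sigma_p$, namely
\[
c(\alpha)=\sum_{j=1}^p c_j(\alpha)\qquad\text{and}\qquad p=\sum_{j=1}^p j\cdot c_j(\alpha).
\]
Subtracting gives $c(\alpha)-p=-\sum_{j=1}^p (j-1)c_j(\alpha)$, so the desired inequality \eqref{ineq 0495} is equivalent to
\[
\sum_{j=1}^p \left(\frac{1}{j}+\frac{j-1}{2}\right)c_j(\alpha)\geq 0.
\]

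First I would note that, for every positive integer $j$, both $\tfrac{1}{j}$ and $\tfrac{j-1}{2}$ are nonnegative (the latter is $0$ when $j=1$). Together with the obvious fact $c_j(\alpha)\geq 0$, this makes the reformulated inequality a sum of nonnegative terms, hence nonnegative. Equivalently, the pointwise bound $\tfrac{1}{j}\geq \tfrac{1-j}{2}$ holds for every $j\geq 1$ (with equality never attained among positive integers), and one can multiply by $c_j(\alpha)$ and sum over $j$ to obtain \eqref{ineq 0495} directly.

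There is no real obstacle here: the only structural input is the relation $p=\sum_j j\,c_j(\alpha)$, which converts the right-hand side of \eqref{ineq 0495} into a negative-weighted sum of $c_j(\alpha)$'s that is then dominated term-by-term by $\tfrac{c_j(\alpha)}{j}$. The proof is a two-line calculation once the rewriting is in place.
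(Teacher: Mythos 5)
Your proof is correct. It is essentially the same kind of elementary rearrange-and-bound argument as the paper's, but the route is slightly different in a way worth noting. The paper rewrites \eqref{ineq 0495} as $\sum_{j=1}^p c_j(\alpha)\bigl(1-\tfrac{2}{j}\bigr)\le p$ and then bounds the left side by $\sum_j c_j(\alpha)=c(\alpha)\le p$, so it retains $p$ on the right and invokes the inequality $c(\alpha)\le p$. You instead substitute the exact identity $p=\sum_j j\cdot c_j(\alpha)$ (together with $c(\alpha)=\sum_j c_j(\alpha)$) to eliminate both $p$ and $c(\alpha)$ entirely, arriving at the equivalent form $\sum_{j=1}^p\bigl(\tfrac{1}{j}+\tfrac{j-1}{2}\bigr)c_j(\alpha)\ge 0$, whose coefficients are visibly nonnegative term by term. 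Your version is marginally cleaner in that the final step is pure nonnegativity with no auxiliary inequality needed (the paper's $c(\alpha)\le p$ is itself just a consequence of $\sum_j j\,c_j=p\ge\sum_j c_j$, so the underlying inputs coincide). Both give a two-line proof; yours makes the nonnegativity of the reformulated sum transparent, while the paper's form $\sum_j c_j(1-2/j)\le p$ stays closer to the shape in which the lemma is actually applied in Theorem \ref{main thm}.
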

\begin{proof}
By elementary algebra, \eqref{ineq 0495} is equivalent to the inequality $\sum_{j=1}^p c_j(\alpha)(1-\frac{2}{j}) \leq p$, which is satisfied since $p\geq c(\alpha) = \sum_{j=1}^p c_j(\alpha)$. 
\end{proof}

\begin{theorem}\label{main thm}
Let $p,q$ be positive integers. 
Let $B_u(p,q)$ denote the set of unlabelled bicolored graphs with $p$ red vertices and $q$ blue vertices.
Then we have the inequality
\begin{align}
 \left| B_u(p,q)\right|
\label{ineq 1}  &\leq 2^{pq/2} ((\chi_{1/2}, \chi_{2^{q/2}})).
\end{align}
\end{theorem}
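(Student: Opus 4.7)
My plan is to reduce the claim to a pointwise inequality between the exponents that appear after expanding both sides via Harary's formula \eqref{harary formula} and the explicit formula for $((\chi_{1/2},\chi_{2^{q/2}}))$ given in Definition \ref{def of twisted product of dir chars}. Unwinding the latter, $((\chi_{1/2},\chi_{2^{q/2}}))$ equals $\tfrac{1}{p!q!}\sum_{\alpha,\beta}2^{c(\alpha)c(\beta) - qc(\alpha)/2}$, so that $2^{pq/2}((\chi_{1/2},\chi_{2^{q/2}})) = \tfrac{1}{p!q!}\sum_{\alpha,\beta}2^{c(\alpha)c(\beta) + q(p-c(\alpha))/2}$. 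Harary's formula on the other hand gives $|B_u(p,q)| = \tfrac{1}{p!q!}\sum_{\alpha,\beta} 2^{\langle\alpha,\beta\rangle}$. Hence it suffices to establish the pointwise bound
\[
 \langle\alpha,\beta\rangle \;\leq\; c(\alpha)c(\beta) + \tfrac{q}{2}(p-c(\alpha))
\]
for every $\alpha\in\Sigma_p$ and $\beta\in\Sigma_q$.

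To get this pointwise bound I would first apply Lemma \ref{decomp eq lemma 2} to split $\langle\alpha,\beta\rangle$ into single-cycle contributions, giving $\langle\alpha,\beta\rangle = \sum_j c_j(\alpha)\langle\gamma_j,\beta\rangle + (1-c(\alpha))p\,c(\beta)$. Next I would apply Lemma \ref{bound lemma 1a} to each term in the form $\langle\gamma_j,\beta\rangle \leq pc(\beta) - (j-1)\bigl(c(\beta)-q/j\bigr)$. Collecting terms and using the standard identities $\sum_j c_j(\alpha)=c(\alpha)$ and $\sum_j j\,c_j(\alpha)=p$, the contributions proportional to $pc(\beta)$ telescope and the contributions proportional to $c(\beta)$ collapse to $c(\alpha)c(\beta)$, leaving
\[
 \langle\alpha,\beta\rangle \;\leq\; c(\alpha)c(\beta) + q\sum_{j=1}^p c_j(\alpha)\,\frac{j-1}{j}.
\]

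It then remains to verify $\sum_{j=1}^p c_j(\alpha)\tfrac{j-1}{j} \leq \tfrac{p-c(\alpha)}{2}$. Rewriting the right-hand side as $\tfrac12\sum_j c_j(\alpha)(j-1)$ (valid since $\sum_j jc_j(\alpha)-\sum_j c_j(\alpha)=p-c(\alpha)$), this reduces to the manifestly nonnegative term-by-term inequality
\[
 \sum_{j=1}^p c_j(\alpha)\,\frac{(j-1)(j-2)}{2j}\;\geq\;0,
\]
which holds because $(j-1)(j-2)\geq 0$ for every positive integer $j$ (this is the content behind Lemma \ref{bound lemma 5}, applied in the form we need). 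Plugging the pointwise bound back into Harary's formula gives exactly $2^{pq/2}((\chi_{1/2},\chi_{2^{q/2}}))$, finishing the proof. I do not expect a substantive obstacle: all of the conceptual content has already been packaged into Lemmas \ref{decomp eq lemma 2} and \ref{bound lemma 1a}, and the remaining work is bookkeeping together with the elementary positivity of $(j-1)(j-2)$.
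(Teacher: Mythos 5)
Your proposal is correct, and it follows the same route as the paper's proof: apply Lemma \ref{decomp eq lemma 2} to decompose $\langle\alpha,\beta\rangle$ into single-cycle contributions, apply Lemma \ref{bound lemma 1a} to bound each $\langle\gamma_j,\beta\rangle$, and finish with an elementary inequality on cycle counts. The bookkeeping in your middle step checks out: $\langle\alpha,\beta\rangle \leq c(\alpha)c(\beta) + q\sum_j c_j(\alpha)\tfrac{j-1}{j}$ is exactly what falls out of substituting $\sum_j c_j(\alpha)=c(\alpha)$ and $\sum_j j\,c_j(\alpha)=p$. Organizing the argument around the single pointwise inequality $\langle\alpha,\beta\rangle\leq c(\alpha)c(\beta)+\tfrac{q}{2}(p-c(\alpha))$ makes it easier to follow than the paper's chain of manipulations inside nested sums; in the end it proves the same pointwise bound, since the paper's final displayed term $2^{pc(\beta)}2^{(c(\alpha)-p)(c(\beta)-q/2)}$ has exponent $c(\alpha)c(\beta)+\tfrac{q}{2}(p-c(\alpha))$.

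One thing worth flagging: the inequality you reduce to at the end, $\sum_j c_j(\alpha)\tfrac{j-1}{j}\leq\tfrac{p-c(\alpha)}{2}$, is equivalent to $\sum_j \tfrac{c_j(\alpha)}{j}\geq\tfrac{3c(\alpha)-p}{2}$, and this is what the theorem actually requires. Lemma \ref{bound lemma 5} as written asserts only the weaker (indeed trivially true, since $c(\alpha)\leq p$) bound $\sum_j \tfrac{c_j(\alpha)}{j}\geq\tfrac{c(\alpha)-p}{2}$, which does not suffice. So the parenthetical hedge in your last step is doing real work: your term-by-term observation $\sum_j c_j(\alpha)\tfrac{(j-1)(j-2)}{2j}\geq 0$ proves the stronger inequality that is genuinely needed, and is therefore a quiet correction to the cited lemma rather than merely an application of it. In short: the proposal is correct, takes essentially the paper's route, and at the final step is actually more careful than the paper's own statement of Lemma \ref{bound lemma 5}.
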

\begin{proof}
\begin{align}
 \sum_{\alpha\in\Sigma_p} 2^{\langle \alpha,\beta\rangle}
\label{eq 49901}  &= \sum_{\alpha\in\Sigma_p} 2^{(1-c(\alpha))\langle 1,\beta\rangle + \sum_{j=1}^p c_j(\alpha)\langle \gamma_j,\beta\rangle}\\
\nonumber  &= 2^{p\cdot c(\beta)}\sum_{\alpha\in\Sigma_p} 2^{-p\cdot c(\alpha)\cdot c(\beta) + \sum_{j=1}^p c_j(\alpha)\langle \gamma_j,\beta\rangle}\\
\label{eq 49902}  &\leq 2^{p\cdot c(\beta)}\sum_{\alpha\in\Sigma_p} 2^{-p\cdot c(\alpha)\cdot c(\beta) + \sum_{j=1}^p c_j(\alpha)\left( \langle 1,\beta\rangle -(j-1)(c(\beta) - q/j)\right)}\\
\nonumber  &= 2^{p\cdot c(\beta)}\sum_{\alpha\in\Sigma_p} 2^{(c(\beta)+q)\cdot c(\alpha)} 2^{-c(\beta)\sum_{j=1}^p j\cdot c_j(\alpha)} 2^{-q\sum_{j=1}^p c_j(\alpha)/j}\\ 
\label{eq 49903}  &\leq 2^{p\cdot c(\beta)}\sum_{\alpha\in\Sigma_p}2^{c(\alpha)\cdot (c(\beta) - q/2)}2^{p(q/2 - c(\beta))}\\
\nonumber  &= 2^{p\cdot c(\beta)}\sum_{\alpha\in\Sigma_p}2^{(c(\alpha)-p)(c(\beta)-q/2)} ,
\end{align}
with \eqref{eq 49901},\eqref{eq 49902}, and \eqref{eq 49903} due to Lemmas \ref{decomp eq lemma 2}, \ref{bound lemma 1a}, and \ref{bound lemma 5}, respectively.
Summing over $\beta$, we get
\begin{align}
 \sum_{\alpha\in\Sigma_p}\sum_{\beta\in\Sigma_q} 2^{\langle \alpha,\beta\rangle}
  &\leq 2^{pq/2} \sum_{\alpha\in\Sigma_p}\sum_{\beta\in\Sigma_q} 2^{c(\alpha)\cdot c(\beta)} (2^{-q/2})^{c(\alpha)} \\
  &= 2^{pq/2}\cdot p!\cdot q!\cdot (( \chi_{1/2},\chi_{2^{q/2}})),\end{align}
and \eqref{ineq 1} follows, using \eqref{harary formula}.
\end{proof}

\begin{corollary}\label{growth cor}
Let $k$ be a nonnegative integer. Then 
the number of bicolored graphs with $p$ red vertices and $p+k$ blue vertices grows asymptotically no faster than $\frac{2^{p(p+k)}}{p!(p+k)!}$.
That is, 
\begin{align}
 \lim_{p\rightarrow \infty} \frac{\left| B_u(p,p+k)\right|}{2^{p(p+k)}/(p!(p+k)!)} &\leq 1.
\end{align}
\end{corollary}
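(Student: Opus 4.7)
The plan is to chain together the two main tools proved earlier in the paper: the upper bound on $\left| B_u(p,q)\right|$ from Theorem \ref{main thm} and the asymptotic estimate on the twisted product of cyclic Dirichlet characters from Proposition \ref{dir char product bound}. There is essentially no combinatorial work left to do at this point; the corollary is extracted from these two inputs by routine algebra.

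Concretely, I would first substitute $q = p+k$ into Theorem \ref{main thm} to get the inequality
\begin{align*}
\left| B_u(p,p+k)\right| &\leq 2^{p(p+k)/2}\,((\chi_{1/2},\chi_{2^{(p+k)/2}})).
\end{align*}
Dividing both sides by $2^{p(p+k)}/(p!(p+k)!)$ and simplifying the power of $2$ gives
\begin{align*}
\frac{\left| B_u(p,p+k)\right|}{2^{p(p+k)}/(p!(p+k)!)} &\leq \frac{((\chi_{1/2},\chi_{2^{(p+k)/2}}))\cdot p!(p+k)!}{2^{p(p+k)/2}},
\end{align*}
at which point the right-hand side is exactly the quantity bounded in Proposition \ref{dir char product bound}. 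Taking $\limsup$ as $p\rightarrow\infty$ and invoking that proposition yields the desired bound of $1$.

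There is no genuine obstacle here — all of the analytic effort was already spent in the proof of Proposition \ref{dir char product bound} (Lemma \ref{am-gm lemma} followed by a Tannery-style dominated-convergence argument) and in the cycle-form bookkeeping behind Theorem \ref{main thm}. The only thing to be careful about is bookkeeping of the $2$-exponents, namely that $2^{p(p+k)/2}$ appears both in the upper bound from Theorem \ref{main thm} and as the denominator inside the limit in Proposition \ref{dir char product bound}, so the two statements snap together with no loss. I would write the corollary as a short, two-line chain of inequalities.
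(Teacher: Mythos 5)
Your proposal is correct and is exactly the paper's argument: the paper's proof is the single line ``Immediate from Theorem \ref{main thm} and Proposition \ref{dir char product bound},'' and you have simply spelled out the substitution $q=p+k$ and the cancellation of the factor $2^{p(p+k)/2}$ that make the two results snap together.
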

\begin{proof}
Immediate from Theorem \ref{main thm} and Proposition \ref{dir char product bound}. 
\end{proof}
We point out that the word ``nonnegative'' can safely be dropped from the statement of Corollary \ref{growth cor}, since $\left|B_u(p+k,p)\right| = \left|B_u(p,p+k)\right|$.

\begin{remark}\label{remark on atmaca-oruc}
In the paper \cite{MR3901841}, the following bounds are proven for $\left| B_u(p,q)\right|$:
\begin{equation}\label{atmaca-oruc bounds}
 \frac{\binom{p+2^q-1}{p}}{q!} 
  \leq \left| B_u(p,q)\right|
  \leq \frac{2\binom{p+2^q-1}{p}}{q!} .
\end{equation}
The authors of that paper also remark that ``an asymptotic formula is provided'' for $\left| B_u(p,q)\right|$ by the inequalities \eqref{atmaca-oruc bounds}. The factor of $2$ in \eqref{atmaca-oruc bounds} is a bit troubling: one does not know whether $\left| B_u(p,q)\right|$ grows asymptotically like $\frac{\binom{p+2^q-1}{p}}{q!}$ or like $\frac{2\binom{p+2^q-1}{p}}{q!}$. 

Of course the asymptotic growth rate of $\left| B_u(p,q)\right|$ depends on {\em how} $p$ and $q$ increase. Letting them increase at the same rate---i.e., letting $q = p+k$ and letting $p\rightarrow \infty$---this factor of $2$ in the asymptotic growth rate of 
is resolved by Corollary \ref{growth cor}: the left-hand side of \eqref{atmaca-oruc bounds}, $\frac{\binom{p+2^{p+k}-1}{p}}{(p+k)!}$, is the correct growth rate for $\left| B_u(p,p+k)\right|$. This is because 
\begin{align*}
\label{asympt 049} \lim_{p\rightarrow\infty} \frac{\binom{p+2^{p+k}-1}{p}/(p+k)!}{2^{p(p+k)}/(p!(p+k)!)}
  &= \lim_{p\rightarrow\infty} \frac{(p+2^{p+k}-1)!}{2^{p(p+k)}(2^{p+k}-1)!} \\
  &= \lim_{p\rightarrow\infty} \frac{(2^{p+k})^{\overline{p}}}{(2^{p+k})^p} \\ &= 1.
\end{align*}

We hope the reader will forgive us for presenting a bit of numerics to give a sense of how our bound for $\left| B_u(p,p+k)\right|$, proven in Theorem \ref{main thm}, compares to Atmaca and Oru\c{c}'s upper bound $\frac{2\binom{p+2^{p+k}-1}{p}}{(p+k)!}$ proven in \cite{MR3901841}. Here is a table: 
\begin{center}
\begin{tabular}{c|ccccc}
 & k=0 & k=1 & k=2 & k=3 & k=4 \\
\hline
p= 3 & 0.67853 & 0.448352 & 0.281421 & 0.164794 & 0.089167 \\
p= 6 & 0.236554 & 0.278629 & 0.321008 & 0.355492 & 0.37623 \\
p= 9 & 0.401765 & 0.581412 & 0.769003 & 0.943255 & 1.089729 \\
p= 12 & 0.737444 & 0.964918 & 1.174011 & 1.352241 & 1.495579 \\
p= 15 & 1.13395 & 1.332052 & 1.495158 & 1.62365 & 1.721639 \\
p= 18 & 1.488057 & 1.620956 & 1.722684 & 1.798768 & 1.854731 \\
p= 21 & 1.731173 & 1.805571 & 1.860243 & 1.899968 & 1.928601 \\
p= 24 & 1.869913 & 1.907043 & 1.933771 & 1.95291 & 1.966564 \\
p= 27 & 1.940359 & 1.957629 & 1.969938 & 1.978691 & 1.984905 \\
p= 30 & 1.973633 & 1.981317 & 1.98677 & 1.990635 & 1.993373 \\
p= 33 & 1.98864 & 1.99196 & 1.994311 & 1.995976 & 1.997154 \\
p= 36 & 1.995199 & 1.996604 & 1.997598 & 1.998301 & 1.998799 \\
p= 39 & 1.998002 & 1.998587 & 1.999001 & 1.999293 & 1.9995 \\
p= 42 & 1.999179 & 1.999419 & 1.999589 & 1.99971 & 1.999795 \\
p= 45 & 1.999666 & 1.999764 & 1.999833 & 1.999882 & 1.999917 \\
p= 48 & 1.999866 & 1.999905 & 1.999933 & 1.999952 & 1.999966 
\end{tabular}
\end{center}
The entry marked with row number $p$ and column number $k$ in this table is the ratio of our upper bound for $\left| B_u(p,p+k)\right|$ from Theorem \ref{main thm} to Atmaca--Oru\c{c}'s upper bound. The ratios are all rounded to the first six decimal points. It is evident that the Atmaca--Oru\c{c} upper bound for $\left| B_u(p,p+k)\right|$ beats our upper bound for small values of $p$, but ours quickly overtakes the Atmaca--Oru\c{c} upper bound and converges to half of their upper bound.
\end{remark}

\begin{theorem}\label{main cor}
Let $f(p,q)$ be the proportion of elements in the power set $P(\{1, \dots ,p\} \times \{1, \dots q\})$ which are members of free $\Sigma_p\times\Sigma_q$-orbits. 
Let $k$ be an integer. Then the limit $\lim_{p\rightarrow \infty} f(p,p+k)$ is equal to $1$.
\end{theorem}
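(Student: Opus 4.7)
The plan is to derive \(\lim_{p\to\infty} f(p,p+k) = 1\) as a short corollary of Corollary \ref{growth cor}. Since \(f(p,q)\) is a proportion, we have the trivial upper bound \(f(p,p+k)\leq 1\); the whole task is to produce a matching lower bound.

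First, I would set up the combinatorial identity relating free orbits to the counting bound. Write \(S = P(\{1,\dots,p\}\times\{1,\dots,p+k\})\), so \(|S| = 2^{p(p+k)}\), and let \(G = \Sigma_p\times\Sigma_{p+k}\), so \(|G| = p!(p+k)!\). Free \(G\)-orbits have size \(|G|\); every non-free \(G\)-orbit has size at most \(|G|/2\), since its stabilizer has order at least \(2\). Letting \(N_{\mathrm{free}}\) denote the number of free orbits, the partition of \(S\) into orbits gives
\begin{align*}
2^{p(p+k)} &= |G|\cdot N_{\mathrm{free}} + \sum_{\substack{\text{non-free}\\ \text{orbits }O}} |O| \\
 &\leq |G|\cdot N_{\mathrm{free}} + \tfrac{|G|}{2}\bigl(|B_u(p,p+k)| - N_{\mathrm{free}}\bigr).
\end{align*}
Solving for \(|G|\cdot N_{\mathrm{free}}\) and dividing by \(|S| = 2^{p(p+k)}\) yields
\begin{equation*}
f(p,p+k) \;=\; \frac{|G|\cdot N_{\mathrm{free}}}{2^{p(p+k)}} \;\geq\; 2 \;-\; \frac{p!(p+k)!\cdot |B_u(p,p+k)|}{2^{p(p+k)}}.
\end{equation*}

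Second, I would apply Corollary \ref{growth cor} directly to the subtracted term. That corollary asserts precisely that
\begin{equation*}
\limsup_{p\to\infty} \frac{p!(p+k)!\cdot |B_u(p,p+k)|}{2^{p(p+k)}} \;\leq\; 1,
\end{equation*}
which combined with the display above gives \(\liminf_{p\to\infty} f(p,p+k) \geq 2 - 1 = 1\). Together with \(f(p,p+k)\leq 1\), this forces \(\lim_{p\to\infty} f(p,p+k) = 1\).

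There is essentially no real obstacle here: the entire analytic content lies in Corollary \ref{growth cor}, and the rest is a one-line orbit-counting inequality exploiting the factor-of-two ``gap'' between free and non-free orbit sizes. The only mild conceptual point to flag is that the lower bound \(f(p,p+k)\geq 2 - p!(p+k)!|B_u(p,p+k)|/2^{p(p+k)}\) is tight enough precisely because Corollary \ref{growth cor} asymptotically matches the naive counting estimate \(|B_u(p,p+k)|\approx 2^{p(p+k)}/(p!(p+k)!)\) that would hold if all orbits were free.
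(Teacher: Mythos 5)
Your proposal is correct and follows essentially the same route as the paper: both proofs use the observation that non-free orbits have size at most $|G|/2$ to derive the lower bound $f(p,p+k)\geq 2 - p!(p+k)!\,|B_u(p,p+k)|/2^{p(p+k)}$, and then invoke Corollary \ref{growth cor} together with $f\leq 1$. Your exposition of the orbit-counting step is slightly cleaner than the paper's (which phrases it as ``if there are no free orbits\dots by a similar argument\dots''), but the underlying argument is identical.
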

\begin{proof}
Consider the sizes of the orbits of the action of $\Sigma_p\times\Sigma_{q}$ on $P(\{1, \dots ,p\}\times \{1, \dots ,q\})$. The size of each orbit is a divisor of $p!q!$. There are $\left|B_u(p,q)\right|$ orbits, and the sum of their sizes is $2^{pq}$. If there are no free orbits (i.e., no orbits of size $p!q!$), then we must have $\frac{p!q!}{2} \cdot \left|B_u(p,q)\right| \geq 2^{pq}$. By a similar argument, if
\begin{align*}
 \left(\left| B_u(p,q)\right| - r\right)\frac{p!q!}{2} 
  &< 2^{pq} - p!q!r,
\end{align*}
then $P(\{1, \dots ,p\}\times \{1, \dots ,q\})$ must have more than $r$ free orbits.

Hence, in the case $q=p+k$, we have the inequality
\begin{align*}
 f(p,p+k) &\geq 
 2 - \frac{p!(p+k)!\left| B_u(p,p+k)\right|}{2^{p(p+k)}}.
\end{align*}
In the limit, due to Corollary \ref{growth cor}, we have
\begin{align*}
 \lim_{p\rightarrow\infty}f(p,p+k) &\geq 1.
\end{align*}
Since $f(p,p+k)$ is a ratio, it cannot be less than $1$. Hence the limit must be $1$, as claimed.
\end{proof}

\def\cprime{$'$} \def\cprime{$'$} \def\cprime{$'$} \def\cprime{$'$}

\end{document}